\theoremstyle{plain}
\newtheorem{thm}{Theorem}
\newtheorem{obs}{Observation}
\newtheorem*{mainthm}{Main Theorem}
\newtheorem*{kolm}{Kolmogorov's 0--1 Law}
\theoremstyle{definition}
\theoremstyle{remark}
\newcommand{\fancy}[1]{\mathcal{#1}}
\def\A{\fancy{A}}
\def\D{\fancy{D}}
\newcommand{\I}{\mathcal{I}}
\newcommand{\T}{\mathcal{T}}
 \newcommand{\aside}[1]{\marginnote{\scriptsize{#1}}[0cm]}
 \newcommand{\aaside}[2]{\marginnote{\scriptsize{#1}}[#2]}
\newcommand\Emph[1]{\emph{#1}\aside{#1}}
\newcommand\EmphE[2]{\emph{#1}\aaside{#1}{#2}}
\newcommand\Memph[1]{#1\aside{#1}}
\title{A Note on Bootstrap Percolation Thresholds\\ 
in Plane Tilings using Regular Polygons}
\author{
Neal Bushaw\thanks{Department of Mathematics and Applied 
Mathematics, Virginia Commonwealth University, Richmond, VA, U.S.A.;
\texttt{nobushaw@vcu.edu}}
\and
Daniel W. Cranston\thanks{Department of Mathematics and Applied
Mathematics, Virginia Commonwealth University, Richmond, VA, U.S.A.;
\texttt{dcranston@vcu.edu}; 
This research is partially supported by NSA Grant 
H98230-16-0351.}
}
\begin{document}
\maketitle
\abstract{
In \emph{$k$-bootstrap percolation}, we fix $p\in (0,1)$, an integer $k$, and a
plane graph $G$.  Initially, we infect each face of $G$ independently with
probability $p$.  Infected faces remain infected forever, and if a healthy
(uninfected) face has at least $k$ infected neighbors, then it becomes infected.
For fixed $G$ and $p$, the \emph{percolation threshold} is the largest $k$ such
that eventually all faces become infected, with probability at least $1/2$.
For many infinite graphs, we show that this threshold is independent of $p$.

We consider bootstrap percolation in tilings of the plane by regular polygons.
A \Emph{vertex type} in such a tiling is the cyclic order of the faces that
meet a common vertex.  First, we determine the percolation threshold for each
of the Archimedean lattices.  
More generally, let $\mathcal{T}$ denote the set of plane tilings $T$ by
regular polygons such that if $T$ contains one instance of a vertex type, then
$T$ contains infinitely many instances of that type.  We show that no tiling in
$\mathcal{T}$ has threshold 4 or more.  Further, the only tilings
in $\mathcal{T}$ with threshold 3 are four of the Archimedean lattices.
Finally, we describe a large subclass of $\mathcal{T}$ with threshold 2.
}

\section{Introduction}
In \emph{$k$-bootstrap percolation}, we fix $p\in (0,1)$, an integer $k$, and a
plane graph $G$.  Initially, we infect each face of $G$ independently with
probability $p$; call the set of initially infected faces $\I$.  Infected faces
remain infected forever, and if a healthy (uninfected) face has at least $k$
infected neighbors, then it becomes infected.  We say that \Emph{$\I$
percolates} if eventually all faces become infected.  For short, we call this
the \Emph{$k$-bootstrap model}.  For fixed $G$ and
$p$, the \emph{percolation threshold}\footnote{Note that this is different than
the \emph{probability} thresholds often considered for sequences of finite
graphs.}, or simply \EmphE{threshold}{3mm}, is the
largest $k$ such that in the $k$-bootstrap model $\I$ percolates with
probability at least $1/2$.
For a large class of infinite graphs, we show that the
threshold is independent of $p$.  

The $k$-bootstrap model has a long, rich history.  Introduced by
Chalupa, Leath, and Reich \cite{ChalupaLR} in 1979 as a way to model magnetic
materials, it is an example of a monotone cellular automata (introduced by von
Neumann \cite{vonneumann} in 1966).  Most of the work in this field has focused
on finding thresholds for growing families of graphs.  For example, if we
infect each face of the $n\times n$ square grid independently with some
probability $p$, how large must $p$ be so the infection percolates almost
surely, as $n\to\infty$?  The answer to this question, and the first sharp
result in the area, was proved by Holroyd \cite{Holroyd}.  While Holroyd's
result is striking on its own, it has been extended greatly: studying the
problem in higher dimensions, finding more terms of the critical probability
function, and much more (see, e.g.,~\cite{AizemanL, BaloghBCM, BaloghBM, CerfC,
CerfM}).  These bootstrap models have been generalized significantly in recent
years, with the advent of graph bootstrap percolation~\cite{BollobasSU}.

Outside the realm of grids, bootstrap percolation has been studied on many
different families of graphs.  This includes work determining
critical probabilities for random regular graphs \cite{BaloghP}, the
Erd\H{o}s-Renyi random graph $G_{n,p}$ \cite{HolmgrenJK, JansonLTV}, the
hypercube \cite{BaloghBM2}, infinite trees \cite{BaloghPP}, and others.  Largely
ignored, however, has been percolation on infinite lattices (aside from the
square lattice \cite{Schonmann, vanEnter}, discussed below).  We explore this
direction here.

The \Emph{length} of a face of a plane graph is its number of sides.
A \EmphE{configuration}{4mm} is a finite plane graph.  A configuration $H$
\EmphE{appears in $G$}{4mm} if there is a map from faces of $H$ to faces of $G$
that preserves both face length and the number of edges shared by every pair of
faces.
When $H$ appears in $G$, we also say that $G$ \Emph{contains a copy~of~$H$}.
The following observation is straightforward, but it is our main tool for
proving upper bounds on percolation thresholds.

\begin{obs}
\label{obs1}
Let $C$ be a configuration such that each face of $C$ has at most $k$
neighboring faces outside $C$.  If $G$ contains infinitely many copies of
$C$, then $G$ has percolation threshold at most $k$.
\end{obs}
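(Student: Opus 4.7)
The plan is to prove the equivalent statement that in the $(k{+}1)$-bootstrap model, the initial infection $\I$ almost surely does not percolate; since the threshold is the largest $k$ for which percolation has probability $\geq 1/2$, this yields the observation.

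The key step is a blocking lemma: if $C'$ is any copy of $C$ in $G$ with $\I \cap C' = \emptyset$, then no face of $C'$ ever becomes infected in the $(k{+}1)$-bootstrap dynamics. To prove it, I would argue by minimality. Suppose some face of $C'$ does eventually become infected, and let $f \in C'$ be one that becomes infected at the earliest time $t$. Since $f$ is not initially infected, $t \geq 1$, and by minimality of $t$ no face of $C'$ is infected just before $f$ is; in particular, all of $f$'s infected neighbors at that moment lie outside $C'$. The hypothesis bounds the number of such neighbors by $k$, contradicting the $k{+}1$ needed to infect $f$.

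It remains to show that almost surely some copy of $C$ is initially uninfected. The plan is to use the hypothesized infinite family of copies of $C$ to extract an infinite sequence of \emph{pairwise disjoint} copies $C_1, C_2, \ldots$ by a greedy procedure; this extraction is the step I expect to require the most technical care, and it relies on local finiteness of $G$, which ensures that each face of $G$ lies in only finitely many copies of the finite configuration $C$ and hence only finitely many copies intersect any chosen $C_i$. Given such disjoint copies, the events $\{\I \cap C_i = \emptyset\}$ are mutually independent with common probability $(1-p)^{|C|} > 0$, so Borel--Cantelli yields that at least one occurs almost surely. Combined with the blocking lemma, this shows percolation fails almost surely in the $(k{+}1)$-bootstrap model, giving threshold $\leq k$.
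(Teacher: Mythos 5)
Your proof is correct and follows the same approach as the paper's: argue that in the $(k{+}1)$-bootstrap model some copy of $C$ almost surely receives no initial infection and that such a copy then blocks percolation forever. You supply two details the paper states without proof---the minimality argument for the blocking step, and the extraction of pairwise disjoint copies (via local finiteness) to justify the independence needed for the probability-one claim.
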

\begin{proof}
Suppose we are in the $(k+1)$-bootstrap model.
Note that if some copy of $C$ has no initially infected face, then $\I$ does not
percolate, since no face in that copy of $C$ ever becomes infected.  Since
$G$ has infinitely many copies of $C$ (and each face of $G$ is infected
independently), with probability 1 at least one copy of $C$ in $G$ has no face
initially infected.  So, in the $(k+1)$-bootstrap model, $\I$ percolates with
probability~0.
\end{proof}

An immediate consequence of Observation~\ref{obs1} is that the (infinite)
square lattice has percolation threshold at most 2, since we can take as our
configuration $C$ four square faces that meet at a common vertex.
van Enter~\cite{vanEnter} famously proved a matching lower
bound.  That is, the percolation threshold of the square lattice is 2.  In this
note, we extend this result, using the same approach, to determine the percolation thresholds for many
tilings of the plane by regular polygons.  Beyond this, we prove that, somewhat
surprisingly, for a large class of graphs (those whose vertex types repeat
infinitely often) the percolation threshold is never more than four, and the
only tilings achieving this are Archemidean lattices.  We also determine a
large class of tilings for which the threshold is exactly two.

\section{Archimedean Lattices}
A function $f:\mathbb{R}^2\to\mathbb{R}^2$ on a tiling is a \Emph{tiling
translation} if it has the form $f:(x,y)\mapsto(x+a,y+b)$ for some
$a,b\in\mathbb{R}$ and it maps the center of every $d$-gon to the center of a
$d$-gon.  These are simply translations of the plane which map our polygons to
congruent polygons.  As an example, for any $a,b\in\mathbb{Z}$, $f:(x,y)\mapsto
(x+a,y+b)$ is a tiling translation for the (unit) square lattice, but is not
a tiling
translation for the hex lattice when $a$ and $b$ are both nonzero, since the
height of a regular hexagon, with one side axis-aligned, is not a rational
multiple of its width.  An event $E$ is called \Emph{translation-invariant} if
for every initially infected set $\I$
and every tiling translation $f$ we have $f(\I)\in E$ if and only if $\I \in E$.  
For example, the event $E_1=\{\I: \I\textrm{ percolates to the entire
plane}\}$ is translation invariant; if a set $\I$ percolates, it will certainly
also percolate when that set is translated to another location in the plane,
since our percolation process is independent of a face's location in the plane.
 At the other extreme, as an example of a {\emph{non}}-translation-invariant
event consider $E_2=\{\I:\I\textrm{ infects the origin eventually}\}$.  
Let $\I_2=\{\textrm{only the face containing the origin is
infected}\}$.  Now $\I_2\in E$, but for any nontrivial tiling translation $f$ we
have $f(\I_2)\notin E_2$.  An event is \EmphE{weakly
translation-invariant}{-7mm} if
there exist infinitely many distinct tiling translations $f$ such that for each
initially infected set $\I$ we have $f(\I)\in E$ if and only if $\I\in E$.
Our main tool for proving lower bounds on percolation
thresholds is the following lemma of Kolmogorov about translation invariant
events.  This result is quite general, so we state it in a simple
form which is enough for our purposes.

\begin{kolm}
Let $T$ be an infinite graph that is locally finite. If $E$ is a
weakly translation-invariant event, then $\Pr(E)\in \{0,1\}$.
\end{kolm}

The proof is not hard, but requires enough machinery that we do not reproduce
it here.  Nevertheless, this lemma is
crucial to our work, so we give a brief description for the
probabilistically-minded reader. Our probability space is constructed
as a countable product space (whose fundamental events are whether or not
individual faces are infected).  As such, any event -- in particular, our
`initial set percolates' event -- can be approximated arbitrarily well by a
cylinder set (all hexagons within some fixed distance of a specified hexagon).  
Since we have weak translation-invariance, if we translate
sufficiently far we can find another approximating cylinder set which is
disjoint from the first; thus, events within one cylinder set are independent of
those within the other.
By repeating this process, we find infinitely many
disjoint copies of our approximating cylinder set. 
Each of these translations of our cylinder is initially entirely infected with
positive probability.  Hence, with probability one, at least one of these
approximating events will occur\footnote{The details are available, for example,
in \cite[p.~118ff.]{perc-book}.}.  

To show that the square lattice has threshold 2, Chalupa, Leath, and
Reich~\cite{ChalupaLR} defined an event $A$ with the
following three properties: (1) if $A$ occurs, then in the 2-bootstrap model
the initially infected set $\I$ percolates on the square lattice, (2) $A$
occurs with positive probability, and (3) $A$ is translation invariant. 
Properties (1) and (2) clearly imply that in the 2-bootstrap model on the
square lattice, $\I$ percolates with positive probability.  Now
Kolmogorov's 0--1 Law shows that this probability is 1.

An \Emph{Archimedean Lattice} is a vertex transitive (infinite) plane graph in
which each face is a regular polygon.
It is well-known that there are 11 such lattices\footnote{At the start of
Section~\ref{sec3}, we outline a proof of this fact.}, including the three
regular tilings (by the triangle, square, and hexagon).  To describe an
Archimedean Lattice, we write ($f_1.\ldots f_s$), where $f_1,\ldots,f_s$ are
the face lengths, in cyclic order, that meet at each vertex.  For instance, the
regular tilings by triangle, square, and hexagon are denoted (3.3.3.3.3.3),
(4.4.4.4), and (6.6.6).  In Figure~\ref{fig0} we show the other eight
Archimedean Lattices, along with configurations that bound their percolation
thresholds, via Observation~\ref{obs1}.  Clearly, every lattice has
percolation threshold at least 1.  For lattices (3.3.3.3.3.3), (3.3.3.3.3.6),
(3.3.3.4.4), (3.3.4.3.4), and (3.4.6.4) we obtain a matching upper bound,
using Observation~\ref{obs1} and the configurations in Figure~\ref{fig0}.  For
(3.6.3.6) our upper bound is 2, and for each of (3.12.12), (4.6.12), (4.8.8),
and (6.6.6) it
is 3.  So, to determine the bootstrap threshold for each of these five
lattices, the interesting work is proving a matching lower bound.  We first
present a proof for (4.8.8).  Since the proofs of all five lower bounds are
similar, we will just outline the differences for the remaining four lattices.

\captionsetup[subfigure]{labelformat=empty}
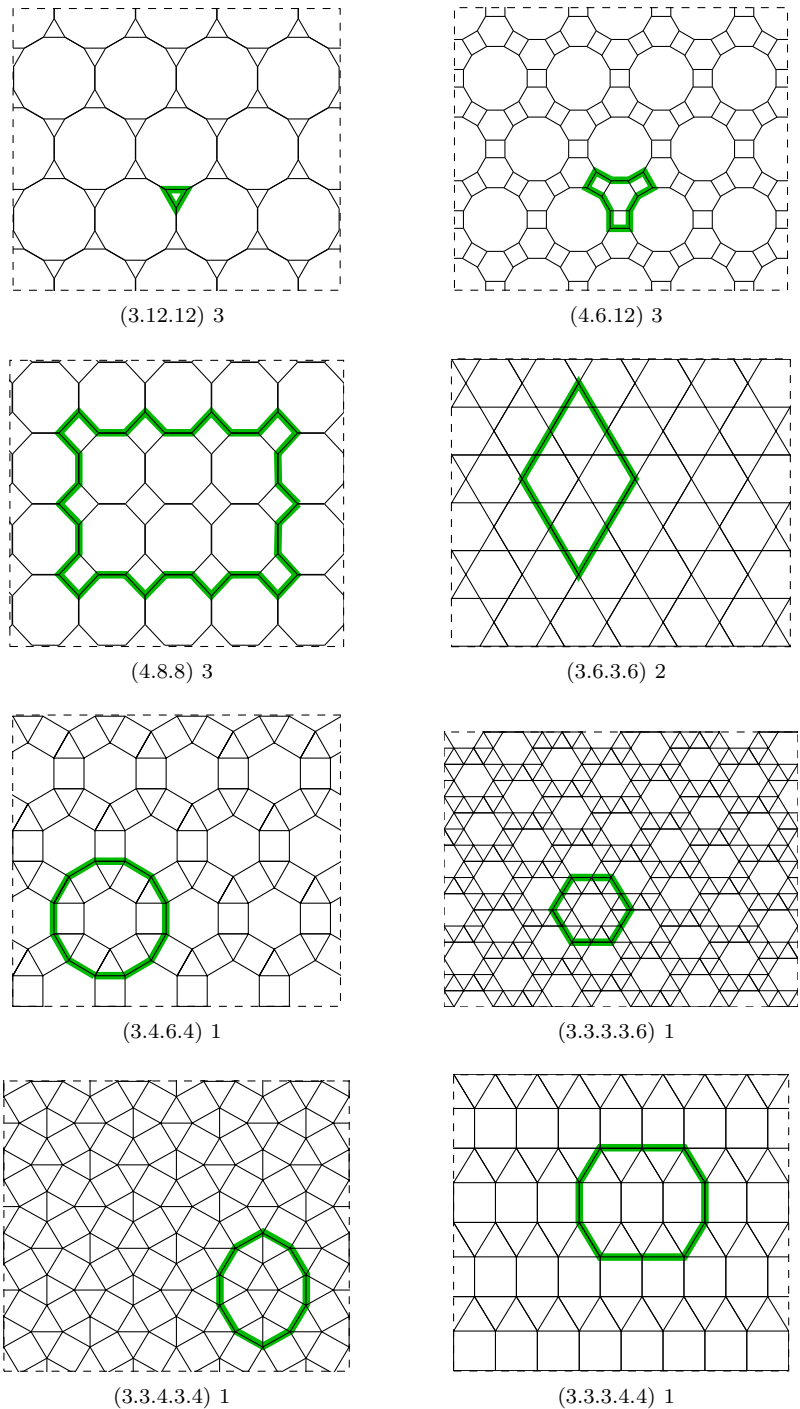
\begin{figure}[!b]
\centering
\subfloat[\scriptsize{(3.12.12)~3}]{
\makebox[.35\textwidth]{
\begin{tikzpicture}[scale = .5*.63*1.7]
\draw[green!73!black, line width = 1mm] (7.8,6) -- (8.35,6) -- (8.075,5.52) -- cycle;
\draw[dashed] (4.03,3.5) rectangle (12.15,10.5);
\begin{scope}
\clip (4.03,3.5) rectangle (12.15,10.5);
\foreach \y in {1,2,3}
\foreach \x in {1, ..., 6}
{
\begin{scope}[xshift=2.02*\x cm, yshift=3.51*\y cm]
\foreach \t in {15, 45, ..., 345}
{
\draw (\t:1.05) -- (\t+30:1.05);
}
\end{scope}
}
\begin{scope}[xshift=1.01 cm, yshift=1.7505cm]
\foreach \y in {1,2}
\foreach \x in {1, ..., 5}
{
\begin{scope}[xshift=2.02*\x cm, yshift=3.51*\y cm]
\foreach \t in {15, 45, ..., 345}
{
\draw (\t:1.05) -- (\t+30:1.05);
}
\end{scope}
}
\end{scope}
\end{scope}
\end{tikzpicture}

}}
\subfloat[\scriptsize{(4.6.12)~3}]{
\makebox[.35\textwidth]{
\begin{tikzpicture}[scale = .39*.63*1.7]
\draw[dashed] (5.3,4.5) rectangle (15.9,13.5);
\begin{scope}
\clip (5.3,4.5) rectangle (15.9,13.5);
\draw[green!73!black, line width = 1mm] 
(10.0,7.5) -- (9.503,7.76) -- (9.793,8.273) -- (10.3,8) -- (10.85,8) --
(11.33,8.273) -- (11.62,7.766) --
(11.125,7.5) -- (10.87,7.04) -- (10.87,6.47) -- (10.25,6.47) -- (10.25,7.04) -- 
cycle;

\foreach \y in {2,3}
\foreach \x in {1, ..., 7}
{
\begin{scope}[xshift=2.64*\x cm, yshift=4.5*\y cm]
\foreach \t in {15, 45, ..., 345}
\draw (\t:1.05) -- (\t+30:1.05);
%
\draw (1.01,.28) -- (1.63,.28);
\draw (1.01,-.28) -- (1.63,-.28);
\draw (.7475,-.7347) -- (1.050,-1.250);
\draw (.2625,-1.015) -- (.574, -1.53);
\draw (-.7475,-.7347) -- (-1.050,-1.250);
\draw (-.2625,-1.015) -- (-.574, -1.53);
\end{scope}
}

\begin{scope}[xshift=1.32 cm, yshift=2.25cm]
\foreach \y in {1,2}
\foreach \x in {1, ..., 6}
{
\begin{scope}[xshift=2.64*\x cm, yshift=4.5*\y cm]
\foreach \t in {15, 45, ..., 345}
\draw (\t:1.05) -- (\t+30:1.05);

\draw (1.01,.28) -- (1.63,.28);
\draw (1.01,-.28) -- (1.63,-.28);
\draw (.7475,-.7347) -- (1.050,-1.250);
\draw (.2625,-1.015) -- (.574, -1.53);
\draw (-.7475,-.7347) -- (-1.050,-1.250);
\draw (-.2625,-1.015) -- (-.574, -1.53);
\end{scope}
}
\end{scope}

\foreach \x in {1, ..., 7}
{
\begin{scope}[xshift=2.64*\x cm, yshift=4.5 cm]
\foreach \t in {15, 45, ..., 345}
\draw (\t:1.05) -- (\t+30:1.05);
%
\draw (1.01,.28) -- (1.63,.28);
\draw (1.01,-.28) -- (1.63,-.28);
\end{scope}
}
\end{scope}
\end{tikzpicture}
}}

\subfloat[\scriptsize{(4.8.8)~3}]{
\makebox[.35\textwidth]{
\begin{tikzpicture}[xscale=.45*.59*1.7, yscale=.45*.63*1.7]
\tikzstyle{unlabeledStyle}=[minimum size = 0pt, inner sep = 0pt, outer sep = 0pt]

\draw[green!73!black, line width=1mm] (8.20, 4.86) -- (8.8,5.44) -- (8.8,6.26) --
(8.22,6.84)-- (8.8,7.42) -- (9.37,6.85) -- (10.17,6.85) -- (10.76, 7.42) --
(11.35,6.85) -- (12.13, 6.85) -- (12.70,7.42) -- (13.27, 6.85) -- (14.10, 6.85)
-- (14.66,7.42) -- (15.22, 6.85) -- (14.66, 6.27) -- (14.68, 5.43) -- (15.26,
4.89);
\begin{scope}[yscale = -1, yshift=-9.78cm]
\draw[green!73!black, line width=1mm] (8.20, 4.86) -- (8.8,5.44) -- (8.8,6.26) --
(8.22,6.84)-- (8.8,7.42) -- (9.37,6.85) -- (10.17,6.85) -- (10.76, 7.42) --
(11.35,6.85) -- (12.13, 6.85) -- (12.70,7.42) -- (13.27, 6.85) -- (14.10, 6.85)
-- (14.66,7.42) -- (15.22, 6.85) -- (14.66, 6.27) -- (14.68, 5.43) -- (15.26,
4.89);
\end{scope}

\draw[dashed] (6.77,.95) rectangle (16.6,8.85);
\begin{scope}
\clip (6.77,.95) rectangle (16.6,8.85);
%
\foreach \y in {1, ..., 8}
{
\foreach \x in {1, ..., 8}
{
\begin{scope}[xshift=1.955*\x cm, yshift=1.955*\y cm]
\foreach \t in {22.5, 67.5, ..., 360}
{
\draw (\t:1.05) -- (\t+45:1.05);
}
\end{scope}
}
}
\end{scope}
\end{tikzpicture}
}}
\subfloat[\scriptsize{(3.6.3.6)~2}]{
\makebox[.35\textwidth]{
\centering
\begin{tikzpicture}[scale= .63*1.7]
\begin{scope}[xscale = .25*.7,  yscale = .433*.7*.98]
\draw[dashed] (.98,-.02) rectangle (25.02,12.02);
\draw[green!73!black, line width = 1mm] (10,3) -- (6,7) -- (10,11) -- (14,7) --
cycle;
\clip (1,0) rectangle (25,12);
\foreach \x in {0,...,5}
\foreach \y in {0,...,2}
{
\begin{scope}[xshift=4*\x cm, yshift=4*\y cm]
\draw (1,2) -- (0,1) -- (1,0) -- (3,0) -- (5,0) -- (4,1) -- (5,2) -- (6,3) --
(5,4) -- (3,4) -- (1,4) -- (2,3) -- (1,2) -- (3,2) -- (4,1) -- (3,0) (5,2) --
(3,2) -- (2,3) -- (3,4);
\end{scope}
}
\end{scope}
\end{tikzpicture}
}}

\subfloat[\scriptsize{(3.4.6.4)~1}]{
\makebox[.35\textwidth]{
\begin{tikzpicture}[xscale = .2*.60*.98*1.7, yscale=.2*.60*1.7]
\draw[dashed] (13.65,14.2) rectangle (35.450,33.2);
\begin{scope}
\clip (13.65,14.2) rectangle (35.45,33.2);
\draw[green!73!black, line width = 1mm] (19.1,16.2) 
-- ++ (2,0) -- ++ (1.732,1) -- ++ (1,1.732) 
-- ++ (0,2) -- ++ (-1,1.732) -- ++ (-1.732,1)
-- ++ (-2,0) -- ++ (-1.732,-1) -- ++ (-1,-1.732)
-- ++ (0,-2) -- ++ (1,-1.732) -- ++ (1.732,-1);
\foreach \y in {2,3}
\foreach \x in {2, ..., 6}
{
\begin{scope}[xshift=5.464*\x cm, yshift=9.464*\y cm]
\draw (0,2) -- (0,0) -- (2,0) -- (2,2) -- (0,2) -- (1,3.732) -- (2,2) --
(3.732,3) -- (5.464,2) -- (6.464,3.732) -- (4.732,4.732) --
(3.732,3) -- (2.732,4.732) -- (1,3.732) -- (0,2) (2.732,4.732) -- (4.732,4.732);
\end{scope}
}
\begin{scope}[xshift=2.732 cm, yshift=4.732cm]
\foreach \y in {1,2}
\foreach \x in {1, ..., 5}
{
\begin{scope}[xshift=5.464*\x cm, yshift=9.464*\y cm]
\draw (0,2) -- (0,0) -- (2,0) -- (2,2) -- (0,2) -- (1,3.732) -- (2,2) --
(3.732,3) -- (5.464,2) -- (6.464,3.732) -- (4.732,4.732) --
(3.732,3) -- (2.732,4.732) -- (1,3.732) -- (0,2) (2.732,4.732) -- (4.732,4.732);
\end{scope}
}
\end{scope}
\end{scope}
\end{tikzpicture}

}}
\subfloat[\scriptsize{(3.3.3.3.6)~1}]{
\makebox[.35\textwidth]{
\begin{tikzpicture}[yscale=.2*.63*.58*1.7, xscale=.2*.63*.61*1.7]
\begin{scope}[yscale = 1.732]
\clip (4,3) rectangle (40,20);
\draw[dashed] (4,3) rectangle (40,20);
\draw[green!73!black, line width = 1mm] (17,7) -- (21,7) -- (23,9) -- (21,11) --
(17,11) -- (15,9) -- cycle;

\foreach \Yshift in {1,2,3,4}
\foreach \Xshift in {1,2,3,4}
{
\foreach \x/\y in {4/2, -5/1, 0/0, 5/-1, -4/-2, 1/-3}
{
\begin{scope}[xshift = \x cm+14*\Xshift cm-2*\Yshift cm, yshift = \Yshift*6cm + \y cm]
\draw (0,0) -- (-2,0) -- (-4,0) (-2,0) -- (-3,-1) -- (-4,0) -- (-5,-1)
(-3,-1) -- (-5,-1) -- (-4,-2) -- (-3,-1) -- (-2,-2) -- (-4,-2) -- (-3,-3) --
(-2,-2) (-3,-3) -- (-1,-3) -- (-2,-2) -- (0,-2) -- (1,-3) -- (-1,-3) -- (0,-2);
\end{scope}
}
}
\end{scope}
\end{tikzpicture}
}}

\subfloat[\scriptsize{(3.3.4.3.4)~1}]{
\makebox[.35\textwidth]{
\begin{tikzpicture}[yscale=.61*.98*1.7, xscale=.618*1.7, rotate=180]
\draw[dashed] (2.73,2.18) rectangle (7.1,5.98);
\clip (2.73,2.18) rectangle (7.1,5.98);
\begin{scope}[scale=.2]
\draw[green!73!black, line width = 1mm] (19.1,20.8) 
-- ++ (1.732,1) -- ++ (1,1.732) 
-- ++ (0,2) 
-- ++ (-1,1.732) -- ++ (-1.732,1)
-- ++ (-1.732,-1) -- ++ (-1,-1.732)
-- ++ (0,-2) 
-- ++ (1,-1.732) -- cycle; 
%

\foreach \y in {1,...,5}
\foreach \x in {2, ..., 6}
{
\begin{scope}[xshift=5.464*\x cm, yshift=5.464*\y cm]
\draw (1.732,2.732) -- (0,1.732) -- (1,0) -- (2.732,1) -- (4.464,0) --
(5.464,1.732) -- (3.732,2.732) -- (5.464,3.732) -- (4.464,5.464) --
(2.732,4.464) -- (1,5.464) -- (0,3.732) -- (1.732,2.732)--
(2.732,4.464)--(3.732,2.732) -- (2.732,1) -- (1.732,2.732) -- (3.732,2.732)
(2.732,4.464) -- (2.732,6.464) (5.464,1.732) -- (5.464,3.732) (4.464,5.464) --
(6.464,5.464);
\end{scope}
}
\end{scope}
\end{tikzpicture}
}}
\subfloat[\scriptsize{(3.3.3.4.4)~1}]{
\makebox[.35\textwidth]{
\begin{tikzpicture}[yscale = .26*.63*.95*1.7, xscale=-1*.26*.63*1.7]
\draw[dashed] (2,14.92) rectangle (18.0,29.82);
\draw[green!73!black, line width = 1mm] (7,20.65) -- (11,20.65) -- (12,22.40) --
(12,24.40) -- (11,26.15) -- (7, 26.15) -- (6, 24.4) -- (6, 22.4) -- cycle;
\begin{scope}
\clip (2,14.92) rectangle (18.0,29.82);
\foreach \y in {1,2,3,4}
\foreach \x in {1, ..., 8}
{
\begin{scope}[xshift=2*\x cm, yshift=7.464*\y cm]
\draw (0,2) -- (0,0) -- (2,0) -- (2,2) -- (0,2) -- (1,3.732) -- (2,2) --
(3,3.732) -- (1,3.732);
\end{scope}
}
\begin{scope}[xshift=1 cm, yshift=3.732cm]
\foreach \y in {1,2,3}
\foreach \x in {0, ..., 8}
{
\begin{scope}[xshift=2*\x cm, yshift=7.464*\y cm]
\draw (0,2) -- (0,0) -- (2,0) -- (2,2) -- (0,2) -- (1,3.732) -- (2,2) -- (3,3.732) -- (1,3.732);
\end{scope}
}
\end{scope}
\end{scope}
\end{tikzpicture}
}}

\caption{The 8 non-regular Archimedean lattices, along with the configurations
used to prove upper bounds on their percolation thresholds.\label{fig0}}
\end{figure}

\begin{thm}
For every $p\in (0,1)$, the percolation threshold for the lattice $(4.8.8)$ is $3$.
\end{thm}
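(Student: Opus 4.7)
The upper bound of $3$ follows immediately from Observation~\ref{obs1} applied to the configuration highlighted for $(4.8.8)$ in Figure~\ref{fig0}, so the plan is to prove the matching lower bound: in the $3$-bootstrap model on $(4.8.8)$, the initially infected set $\I$ percolates with probability at least $1/2$. The strategy mirrors the sketch for the square lattice: exhibit an event $A$ of positive probability that forces percolation, and then invoke Kolmogorov's 0--1 Law applied to the translation-invariant event ``$\I$ percolates'' to upgrade this to probability $1$.

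First I would record the local structure of $(4.8.8)$: the octagon centers lie on a square sublattice, each octagon has $4$ octagon neighbors (at its $N,S,E,W$ sides, sharing an edge directly) and $4$ square neighbors (at its diagonal sides, each filling the gap between four nearby octagons). For positive integers $a,b$, let $B_{a,b}$ denote the block consisting of an $a\times b$ array of octagons together with the $(a-1)(b-1)$ squares that fill the diagonal gaps among them. Fix a large integer $n_0$ and a reference location in the plane, and let $A$ be the event that the specific copy of $B_{n_0,n_0}$ at that location is entirely initially infected and that, at each of the countably many subsequent one-step growths of the block (each appending a row or column of octagons in one of the four cardinal directions), the newly appended row or column contains at least one initially infected octagon.

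The technical heart of the argument is a growth lemma: if $B_{a,b}$ is fully infected and at least one octagon in the column of length $b$ immediately to the east is initially infected, then in the $3$-bootstrap dynamics every octagon of that column and every square between the column and the block eventually becomes infected, yielding the fully infected block $B_{a+1,b}$; the analogous statements in the other three directions follow by symmetry. I would prove this by induction, starting from the seed and propagating both up and down the new column. At each step the next diagonal-gap square has exactly three infected octagon neighbors (two from the block plus the previously-infected column octagon) and so becomes infected; then the next column octagon has three infected neighbors (its block neighbor to the west, its just-infected neighbor in the new column, and the just-infected gap square), and so it too becomes infected. A careful local check confirms that the propagation reaches both endpoints of the new column and leaves the enlarged block fully infected.

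Once the growth lemma is in hand, the rest is quick. A fixed copy of $B_{n_0,n_0}$ is entirely initially infected with probability $p^{n_0^2+(n_0-1)^2}>0$, and a newly appended row or column of length $k$ contains at least one initially infected octagon with probability $1-(1-p)^k$. Since $\sum_{k\ge n_0}(1-p)^k<\infty$, the infinite product $\prod_{k\ge n_0}[1-(1-p)^k]^4$ converges to a strictly positive number, so $\Pr(A)>0$. Because $A$ forces the initial block to grow without bound in all four cardinal directions, it forces $\I$ to percolate; hence $\Pr(\I\text{ percolates})\ge\Pr(A)>0$, and Kolmogorov's 0--1 Law upgrades this to $\Pr(\I\text{ percolates})=1$. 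I expect the main obstacle to be the local bookkeeping in the growth lemma --- verifying at each inductive step that the requisite three infected neighbors really are present, and in particular that the propagation does not stall at either endpoint of a newly appended row or column, where the candidate octagons have fewer block faces available to contribute infected neighbors.
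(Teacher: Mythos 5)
Your proposal is correct and follows essentially the same strategy as the paper: seed a fully infected block, prove a growth lemma showing that a single infected octagon in a new row or column forces the block to expand in that direction, bound the failure probability by a convergent series/product, and upgrade positive probability to probability $1$ via Kolmogorov's 0--1 Law. The only cosmetic difference is that you grow the rectangle one side at a time (which neatly sidesteps the corner-square bookkeeping), whereas the paper grows the box $\D_t$ to $\D_{t+1}$ by adding all four sides of a ring at once and then separately verifies that the four corner squares and corner octagons become infected; your growth-lemma check at the endpoints of the new column (which you correctly flagged and correctly argued through) plays the same role as the paper's corner check.
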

\begin{proof}
By applying Observation~\ref{obs1}, using the configuration in
Figure~\ref{fig0}, we get an upper bound of 3.  So we need only to prove a
matching lower bound.

We draw (4.8.8) so each 8-gon has its center at a lattice
point (and each lattice point is the center of an 8-gon).  We fix $p\in (0,1)$,
and initially infect each face independently with probability $p$.  Call this
set of initially infected faces \Memph{$\I$}.  We show that in the 3-bootstrap
model $\I$ percolates with positive probability.  Let \Memph{$\D_t$} denote
the set of faces with centers at $(x,y)$ such that $|x|\le t$ and $|y|\le t$.
So $\D_t$ contains $(2t+1)^2$ 8-gons and $(2t)^2$ squares.

Suppose that all faces in $\D_t$ are infected.  We want to prove a lower bound
on the probability that eventually all faces in $\D_{t+1}$ become infected.
Suppose that some infected face $f$ is adjacent to a face in the top row of
8-gons of $\D_t$.  Now the infection at $f$ will spread to every face in the
same row as $f$ that is adjacent to some face in $\D_t$.  This spread happens as
follows.  First, we infect the two 4-gons that are adjacent to $f$ and also each
have two neighbors in $\D_t$.  Now we infect each 8-gon $f'$ that is adjacent to
$f$ and also adjacent to a face $\D_t$ (since $f'$ is also adjacent to an
infected 4-gon).  By repeating this argument with $f'$ in place of $f$, we see
that the infection spreads along the row above the top row of $\D_t$ (to the
full width of $\D_t$, which is $2t+1$).  Since each face in the row just above
$\D_t$ is initially infected with probability $p$, the probability that none is
infected is $(1-p)^{2t+1}$.  The same argument applies to the row beneath the
bottom of $\D_t$ and to the columns to the left and right of $\D_t$.  So the
probability that at least one of these two rows and two columns has no infected
face is at most $4(1-p)^{2t+1}$.  If we infect all of both columns and both
rows, then we also infect each square with exactly one neighbor in $\D_t$ (these
are at the corners).  Finally, we infect each corner 8-gon, since it now has two
adjacent infected 8-gons and one adjacent infected square.  Thus, all of
$\D_{t+1}$ becomes infected.

For each $s\ge 0$, call $\D_{s+1}\setminus \D_s$ a \Emph{ring} around $\D_0$.
We partition the faces of each ring into top and bottom rows, left and right
columns, and four corners.  To infect the whole plane, it is enough to have all
faces in $\D_t$ infected (for some $t$) and for each $s\ge t$ to have at least
one infected face in each of its top and bottom rows and right and left columns.
The probability of having at least one ring without the necessary infected faces
is at most $\sum_{s\ge t}4(1-p)^{2s+1} = 4(1-p)^{2t+1}/(1-(1-p)^2)$.  For $t$
sufficiently large, this probability is less than 1.  The probability that every
8-gon in $\D_t$ is initially infected is $p^{(2t+1)^2}$; if the 8-gons are all
infected, then the squares immediately become infected.
Since each face is infected independently, the probability of infecting the 
whole plane is at least $p^{(2t+1)^2}(1-4(1-p)^{2t+1})/(1-(1-p)^2)$, which is
positive for $t$ sufficiently large.  So, in the 3-bootstrap model, with
positive probability, the whole plane becomes infected.  Now we use Kolmogorov's
0--1 Law to show that, in fact, the whole plane becomes infected with
probability 1.  To apply the 0--1 Law, we only need to note that the event that
the initial set $\I$ percolates is weakly translation-invariant.
\end{proof}

The proofs of the lower bounds for (3.6.3.6), (4.6.12), and (6.6.6) are similar.
The only noticeable difference is the shapes of the sets analogous to $\D_t$ and
the details of how $\D_t$ grows to $\D_{t+1}$ when we have at least one infected
face on each side of the ring $\D_{t+1}\setminus \D_t$.  In each case, the shape
of the set $\D_t$ is closer to a hexagon than a square, so the ring
$\D_{t+1}\setminus \D_t$ has six sides, rather than four; this is most obvious
for (6.6.6).  In Figure~\ref{fig1} we show examples of how one side of $\D_t$
grows to $\D_{t+1}$ for (3.6.3.6) and (4.6.12).  The faces marked with $\times$
are already infected, and the integers denote the order that new faces
become infected.  For (4.6.12), the faces labeled 0 become infected
immediately, since each has three infected neighbors.

\begin{figure}
\centering
\begin{tikzpicture}
\begin{scope}[xscale =.25*.9, yscale = .433*.9]
\draw[dotted] (0,0) rectangle (24,12);
\clip (0,0) rectangle (24,12);
\foreach \y in {0,4,8}
{
\begin{scope}[yshift = \y cm]
\draw (0,4) -- (24,4) (0,2) --  (24,2) (0,0) -- (24,0) 
(3,4)--(7,0) (7,4)--(11,0) (11,4)--(15,0) (15,4)--(19,0) (19,4)--(23,0)
(-1,4) -- (3,0) (23,4) -- (24,3) (1,4) -- (0,3)
(1,0)--(5,4) (5,0)--(9,4) (9,0)--(13,4) (13,0)--(17,4) (17,0)--(21,4)
(21,0) -- (25,4);
\end{scope}
}

\draw (10,9.65) node {\tiny{$4$}};
\draw (14,9.65) node {\tiny{$4$}};

\draw (8,9) node {\large{$3$}};
\draw (12,9) node {\large{$1$}};
\draw (16,9) node {\large{$3$}};

\draw (6,8.35) node  {\tiny{$4$}};
\draw (10,8.35) node {\tiny{$2$}};
\draw (14,8.35) node {\tiny{$2$}};
\draw (18,8.35) node {\tiny{$4$}};

\draw (8,7.65) node {\tiny{$\times$}};
\draw (12,7.65) node {\tiny{$\times$}};
\draw (16,7.65) node {\tiny{$\times$}};

\draw (6,7) node  {\LARGE{$\times$}};
\draw (10,7) node {\LARGE{$\times$}};
\draw (14,7) node {\LARGE{$\times$}};
\draw (18,7) node {\LARGE{$\times$}};

\draw (4,6.35) node {\tiny{$\times$}};
\draw (8,6.35) node {\tiny{$\times$}};
\draw (12,6.35) node {\tiny{$\times$}};
\draw (16,6.35) node {\tiny{$\times$}};
\draw (20,6.35) node {\tiny{$\times$}};

\draw (6,7) node  {\LARGE{$\times$}};
\draw (10,7) node {\LARGE{$\times$}};
\draw (14,7) node {\LARGE{$\times$}};
\draw (18,7) node {\LARGE{$\times$}};

\draw (6,5.65) node  {\tiny{$\times$}};
\draw (10,5.65) node {\tiny{$\times$}};
\draw (14,5.65) node {\tiny{$\times$}};
\draw (18,5.65) node {\tiny{$\times$}};

\draw (4,5) node {\LARGE{$\times$}};
\draw (8,5) node {\LARGE{$\times$}};
\draw (12,5) node {\LARGE{$\times$}};
\draw (16,5) node {\LARGE{$\times$}};
\draw (20,5) node {\LARGE{$\times$}};

\draw (6,4.35) node  {\tiny{$\times$}};
\draw (10,4.35) node {\tiny{$\times$}};
\draw (14,4.35) node {\tiny{$\times$}};
\draw (18,4.35) node {\tiny{$\times$}};

\draw (4,3.65) node {\tiny{$\times$}};
\draw (8,3.65) node {\tiny{$\times$}};
\draw (12,3.65) node {\tiny{$\times$}};
\draw (16,3.65) node {\tiny{$\times$}};
\draw (20,3.65) node {\tiny{$\times$}};

\draw (6,3) node  {\LARGE{$\times$}};
\draw (10,3) node {\LARGE{$\times$}};
\draw (14,3) node {\LARGE{$\times$}};
\draw (18,3) node {\LARGE{$\times$}};

\draw (8,2.35) node {\tiny{$\times$}};
\draw (12,2.35) node {\tiny{$\times$}};
\draw (16,2.35) node {\tiny{$\times$}};

\end{scope}

\begin{scope}[yshift=-.47in, xshift=1.5in,scale=.523]
\clip (4.0,2.25) rectangle (17.2,11.25);
\draw[dotted] (4.0,2.25) rectangle (17.2,11.25);
\foreach \y in {1,2,3}
\foreach \x in {1, ..., 7}
{
\begin{scope}[xshift=2.64*\x cm, yshift=4.5*\y cm]
\foreach \t in {15, 45, ..., 345}
\draw (\t:1.05) -- (\t+30:1.05);
%
\draw (1.01,.28) -- (1.63,.28);
\draw (1.01,-.28) -- (1.63,-.28);
\draw (.7475,-.7347) -- (1.050,-1.250);
\draw (.2625,-1.015) -- (.574, -1.53);
\draw (-.7475,-.7347) -- (-1.050,-1.250);
\draw (-.2625,-1.015) -- (-.574, -1.53);
\end{scope}
}

\begin{scope}[xshift=1.32 cm, yshift=2.25cm]
\foreach \y in {0,1,2}
\foreach \x in {1, ..., 6}
{
\begin{scope}[xshift=2.64*\x cm, yshift=4.5*\y cm]
\foreach \t in {15, 45, ..., 345}
\draw (\t:1.05) -- (\t+30:1.05);

\draw (1.01,.28) -- (1.63,.28);
\draw (1.01,-.28) -- (1.63,-.28);
\draw (.7475,-.7347) -- (1.050,-1.250);
\draw (.2625,-1.015) -- (.574, -1.53);
\draw (-.7475,-.7347) -- (-1.050,-1.250);
\draw (-.2625,-1.015) -- (-.574, -1.53);
\end{scope}
}
\foreach \x/\y in {1/1,2/1,3/1,4/1,5/1,1/2,2/2,3/2,4/2} 
\draw (2.64*\x cm+\y*1.325cm, \y*2.25cm) node {\huge{$\times$}};
\foreach \x/\y in {1/1,2/1,3/1,4/1,1/2,2/2,3/2} 
\draw (2.64*\x cm+\y*1.325cm+1.325cm, \y*2.25cm) node {\large{$\times$}};
\foreach \x in {1,...,8} 
\draw (1.32*\x cm+2.5*1.32cm, 3.375cm) 
node[rotate=30*(-1)^\x]
{\large{$\times$}};
\foreach \x in {1,...,4} 
\draw (2.64*\x cm+1.32cm+1.32cm, 3.00cm) node {\large{$\times$}};
\foreach \x in {1,...,3} 
\draw (2.64*\x cm+1.32cm+2.65cm, 3.75cm) node {\large{$\times$}};
\foreach \x/\num in {1/5,2/1,3/5} 
{
\draw (2.64*\x cm+1.32cm+2.65cm, 6.75cm) node {\LARGE{\num}};
\draw (2.64*\x cm+1.32cm+2.65cm, 5.25cm) node {\large{$0$}};
}
\foreach \x/\num in {1/6,2/4,3/2,4/2,5/4,6/6} 
\draw (1.32*\x cm+3.5*1.32cm, 5.625cm) node[rotate=30*(-1)^\x]
{\scriptsize{$\num$}};
\foreach \x/\num in {1/6,2/6} 
\draw (2.64*\x cm+3*1.32cm+1.32cm, 3*2.25cm) node {\scriptsize{$\num$}};
\foreach \x/\num in {1/3,2/3} 
\draw (2.64*\x cm+3*1.32cm+1.32cm, 3*2.25cm-.75cm) node {\small{$\num$}};

\end{scope}

\end{scope}

\end{tikzpicture}
\caption{The order in which faces in the next row become infected, when proving
that the lattice $(3.6.3.6)$ has threshold 2 and that the lattice $(4.6.12)$ has
threshold 3.\label{fig1}}
\end{figure}
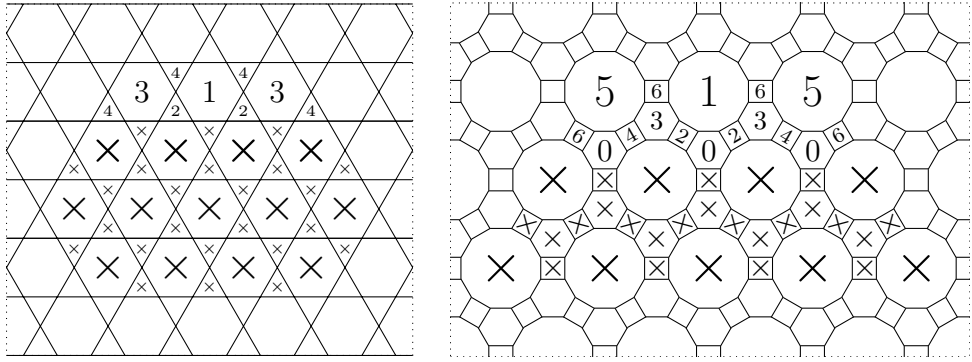

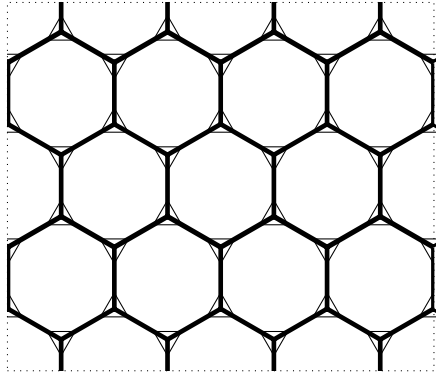
\begin{figure}[!h]
\centering
\begin{tikzpicture}[scale = .7]
\draw[dotted] (4.03,3.5) rectangle (12.13,10.5);
\begin{scope}
\clip (4.03,3.5) rectangle (12.15,10.5);
\foreach \y in {1,2,3}
\foreach \x in {1, ..., 6}
{
\begin{scope}[xshift=2.02*\x cm, yshift=3.51*\y cm]
\draw (45:1.05)  
\foreach \t in {30, 60, ..., 360}
{ -- (\t+15:1.05)} -- (45:1.05);
\draw[very thick] (30:1.15)  
\foreach \t in {15, 75, ..., 345}
{ -- (\t+15:1.15)} -- (30:1.15);

\end{scope}
}
\begin{scope}[xshift=1.01 cm, yshift=1.7505cm]
\foreach \y in {1,2}
\foreach \x in {1, ..., 5}
{
\begin{scope}[xshift=2.02*\x cm, yshift=3.51*\y cm]
\draw (45:1.05)  
\foreach \t in {30, 60, ..., 360}
{ -- (\t+15:1.05)} -- (45:1.05);
\draw[very thick] (30:1.15)  
\foreach \t in {15, 75, ..., 345}
{ -- (\t+15:1.15)} -- (30:1.15);

\end{scope}
}
\end{scope}
\end{scope}
\end{tikzpicture}
\caption{Inflating 12-gons to hexagons shows that (3.12.12) has threshold at
least 3, since (6.6.6) has threshold 3.\label{fig2}}
\end{figure}

The fact that (3.12.12) has bootstrap threshold at least 3 follows directly from
the fact that (6.6.6) does.  We inflate each 12-gon in (3.12.12) to include one
third of each incident triangle.  This produces (6.6.6), as shown in
Figure~\ref{fig2}.
When we inflate a 12-gon, it does not become incident to any new face.  Since
(6.6.6) has threshold 3, we conclude that in the 3-bootstrap model, with
probability 1 every 12-gon in (3.12.12) becomes infected.  And once the three
12-gons incident to a triangle become infected, so does the triangle.  Thus,
(3.12.12) has threshold at least 3.  Finally,
recall that the lattice (4.4.4.4) has threshold 2.  (This was proved by van Enter~\cite{vanEnter};  it is this proof which
inspired the present paper.)

\section{More General Tilings}
\label{sec3}
In a plane tiling by regular polygons, the \emph{vertex type} for a vertex $v$
is the cyclicly ordered list of the lengths of faces that meet at $v$.
Since the interior angle of a regular $t$-gon is known (its measure in degrees 
is $180(t-2)/t$), determining the set of all possible vertex types is a
simple exercise in diophantine equations. Up to reflection, we have 21 types.
These are 3.3.3.3.3.3, 3.3.3.3.6, 3.3.3.4.4, 3.3.4.3.4,
3.3.6.6, 3.6.3.6, 3.3.4.12, 3.4.3.12, 3.4.4.6, 3.4.6.4, 4.4.4.4, 3.7.42, 3.8.24,
3.9.18, 3.10.15, 3.12.12, 4.5.20, 4.6.12, 4.8.8, 5.5.10, 6.6.6.  (Analyzing
these 21 possibilities gives a straightforward, albeit tedious, proof that there
are only 11 Archimedean lattices.) Gr\"{u}nbaum and Shephard~\cite{GS} give
nice pictures of the 21 types, as well as many plane tilings by regular
polygons.

Let $\T$ denote the set of plane tilings such that if $T\in \T$ and some vertex
type appears in $T$, then that type appears in $T$ infinitely often.  
It is easy to see that $\T$ contains more tilings than just the Archimedean
Lattices.  A portion of such a tiling is shown in Figure~\ref{fig5}.
We prove the following.

\begin{mainthm}
No tiling in $\T$ has threshold $4$ or more, and the only tilings in $\T$ with
threshold $3$ are the lattices $(3.12.12)$, $(4.6.12)$, $(4.8.8)$, and $(6.6.6)$.
\end{mainthm}
\begin{proof}
Fix $T\in \T$.  As a warmup, we show that $T$ has
threshold at most 4.  Suppose $T$ has a vertex $v$ of type
other than 5.5.10 and 6.6.6.  Note, by examining the 21 types above, that $v$
has an incident 3-face or 4-face.  So, by definition, $T$ has infinitely many
3-faces or 4-faces.  Now Observation~\ref{obs1} shows that $T$ has threshold at
most 4.  As we show in the next paragraph, type 5.5.10 cannot appear in any
plane tiling.  Finally, if $T$ has only vertex type 6.6.6, then $T$ is the
lattice (6.6.6), which has threshold 3.

The rest of the proof simply refines the idea in the previous paragraph.  We
first show that six types cannot appear in $T$ at all.  Suppose that $T$
contains a vertex of type 3.7.42.  Since no other type contains 7-gons or
42-gons, the lengths of faces incident to this 3-gon must alternate between 7
and 42.  But this is impossible, since 3 is odd.  So $T$ contains no vertex of
type 3.7.42.  Similar arguments show that $T$ contains no vertex of any of types
3.8.24, 3.9.18, 3.10.15, 4.5.20, and 5.5.10.

For the remaining types $t$ other than 3.6.3.6, 3.12.12, 4.6.12, 4.8.8, and
6.6.6, we show that if $T$ contains type $t$, then $T$ contains a configuration
$H$ where each face of $H$ has at most 2 adjacent faces outside $H$.  Since
$H$ appears infinitely often, by Observation~\ref{obs1} the threshold of $T$
is at most 2, as desired.  The details follow.

If $T$ contains two adjacent triangles, then we take these as $H$.  This handles
six types, leaving only 3.4.4.6, 3.4.6.4, 3.4.3.12, and 4.4.4.4.  If $v$ has
type 4.4.4.4, then $H$ is its four incident squares.  If $v$ has type 3.4.3.12,
then $H$ is the two incident triangles and the incident square.  If $v$ has type
3.4.4.6, then a short analysis shows that $T$ contains one of the configurations
on the left in Figure~\ref{fig3}.  Finally, if $v$ has type 3.4.6.4, then a
(slightly longer) proof shows that $T$ contains the configuration on the right in
Figure~\ref{fig4} (or else contains two triangles linked by one or two squares,
similar to the cases on the left of Figure~\ref{fig4}).


\begin{figure}[!h]
\centering
\begin{tikzpicture}[scale=.3]
\fill (2,3.45) circle (.3cm); 
\fill (9,3.45) circle (.3cm); %
\fill (16,3.45) circle (.3cm);%

\begin{scope}[yscale=1.732]
\draw(3,3) --++ (-1,1) --++ (-2,0) --++ (-1,-1) --++ (1,-1)
--++ (0,-1.155) --++ (-1,-1) --++ (1,-1) --++ (2,0) --++ (1,1) --++ (1,1)
--++ (-2,0) --++ (-2,0) --++ (0,1.155) --++ (2,0) --++ (2,0);
\draw[ultra thick] (2,2) --++ (1,1) --++ (1,-1) --++ (0,-1.155) --++ (-1,-1)
--++ (-1,1) -- cycle;

\begin{scope}[xshift = 7cm]
\draw(4,2) --++ (-2,2) --++ (-2,0) --++ (-1,-1) --++ (1,-1)
--++ (0,-1.155) --++ (4,0) (3,3) --++ (-1,-1) --++ (0,-2.31);
\draw[ultra thick] (4,2) --++ (-4,0) --++ (0,-2.31) --++ (4,0) -- cycle;
\end{scope}

\begin{scope}[xshift = 14cm]
\draw(3,3) --++ (-1,1) --++ (-2,0) --++ (-1,-1) --++ (1,-1)
--++ (0,-1.155) --++ (2,-2) --++ (2,0) --++ (1,1) --++ (-1,1) (4,2) --++ (-2,0)
--++ (0,-1.155) --++ (-2,0);
\draw[ultra thick] (0,2) --++ (2,0) --++ (1,1) --++ (1,-1) --++ (0,-1.155) --++
(-2,0) --++ (-1,-1) --++ (-1,1) -- cycle;
\end{scope}

\end{scope}

\begin{scope} [scale = 3, xshift = 4in, yshift=.5in] 
\fill (-1.56,0.28) circle (.1cm); %
\foreach \t in {15, 75, ..., 345}
{
\draw (\t+60:1.05)
\foreach \s in {90, 180, 270, 360}
{ -- ++(\s+\t-135:.543)};
\draw (\t:1.05) 
\foreach \s in {60,120,..., 360}
{ -- ++(\s+\t-75:.543)};
\draw (\t:1.05) -- ++ (\t-15:.543) 
-- ++ (\t+45:.543)
-- ++ (\t-45:.543)
-- ++ (\t-135:.543)
-- ++ (\t-225:.543);
\draw (\t+30:1.05) -- ++ (\t+45:.543) 
-- ++ (\t+75:.543)
-- ++ (\t-15:.543)
-- ++ (\t-105:.543)
-- ++ (\t-195:.543);
}
\end{scope}

\end{tikzpicture}
\caption{Left: The three possibilities for $C$ when $T$ contains a vertex of type
3.4.4.6.\label{fig3}
Right: A configuration, $C$, of 31 faces in which each face has at most two
neighbors outside $C$.\label{fig4}}
\end{figure}
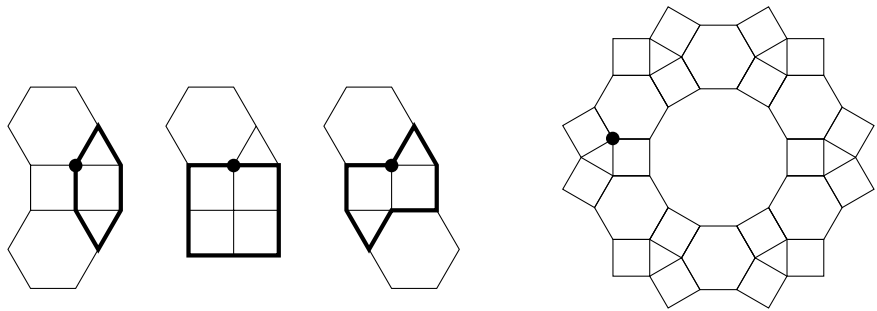

%

The remaining types to consider are 3.6.3.6, 3.12.12, 4.6.12, 4.8.8,
and 6.6.6.  To see that $T$ must be an Archimedean lattice, note that none of
these types agree in two or more successive face lengths.  So it is impossible
for $T$ to ``switch'' from one type to another.
\end{proof}

It is worth noting that we cannot relax the hypothesis in the Main Theorem to
require only that \emph{some} vertex type appears infinitely often.  For
example, suppose we start with the hex lattice and replace finitely many
hexagons each with 6 triangles.  If any of the resulting vertices of type
3.3.3.3.3.3 has no incident faces initially infected, then the percolation
threshold drops from 3 to 1.  Hence, the percolation threshold depends heavily
on $p$, the probability that each face is initially infected.

To conclude, we briefly discuss a family of tilings we call
$\mathcal{T}_{\textrm{strips}}$.  These tilings are formed by ``stacking''
infinite horizontal strips of polygons above and below each other to fill the
entire plane.  The two types of strips that we
use are \Emph{hex strips}, consisting of hexagons and triangles, and
\Emph{square strips}, consisting just of squares.  Figure~\ref{fig:strips}
shows an example.  
Since the hex strips can be
shifted left or right, this family contains uncountably many tilings.

Despite the variety in the tilings of
$\mathcal{T}_{\textrm{strips}}$, they all have the same threshold.
 The proof is similar to our proof for the lattice (4.8.8),
with a little difficulty added by the irregularly shaped rings we use now (what
were previously $\D_{t+1}\setminus \D_t$).  Two hex strips are
\Emph{offset} if the centers of their hexagons are not directly above one
another.

\begin{thm}\label{stripsthm}
Every tiling in $\mathcal{T}_{\textrm{strips}}$ has percolation threshold $2$.
\end{thm}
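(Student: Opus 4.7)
The plan is to adapt the strategy used for the lattice $(4.8.8)$: establish threshold at most $2$ via Observation~\ref{obs1}, then match with a lower bound via a seed-plus-growing-rings argument together with Kolmogorov's 0--1~Law.

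For the upper bound: if $T \in \mathcal{T}_{\textrm{strips}}$ has no hex strip then $T$ is the square lattice, which has threshold exactly $2$ by van~Enter. Otherwise $T$ has at least one hex strip, which is horizontally periodic, so a finite local configuration (analogous to the four-face configuration used in Figure~\ref{fig0} for the lattice $(3.6.3.6)$, which has all faces lying inside a single hex strip) appears infinitely often. Each such configuration has every face with at most two neighbors outside it, so Observation~\ref{obs1} gives threshold at most $2$.

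For the lower bound, fix $p \in (0,1)$ and consider the $2$-bootstrap model. Define nested finite regions $\D_0 \subseteq \D_1 \subseteq \cdots$ with $\bigcup_t \D_t$ equal to all of $T$: take $\D_t$ to be the union of the $t$ strips immediately above and below a fixed seed strip, truncated horizontally so that each strip contributes only its $t$-th faces to the left and right of a fixed central column. The crucial growth step asserts that if $\D_t$ is fully infected and each of the four sides of the ring $\D_{t+1}\setminus\D_t$ contains at least one initially infected face, then $\D_{t+1}$ becomes fully infected. Horizontal growth within any single strip is a direct analog of the argument for $(4.8.8)$: a seed face in the new column pairs with the already-infected column of $\D_t$ to give each face in the row two infected neighbors, propagating across the strip. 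Vertical growth across a strip boundary---of which there are three essentially distinct types (hex above hex possibly with horizontal offset, hex above square, and square above square)---requires a case analysis: in each case, once one face in the new strip is infected via the seed or a chain from it, adjacency of the new strip to the already-infected strip below gives each subsequent face a second infected neighbor, and the infection cascades across the entire new strip.

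The main obstacle is handling this vertical growth for each strip-boundary pair carefully, together with bookkeeping for the irregularly shaped rings (which are no longer the neat rectangles of the $(4.8.8)$ proof). Once the growth step is in place, the probability that some side of ring $t$ contains no initially infected face is at most $O((1-p)^{ct})$ for some $c>0$, so $\sum_{t \geq t_0}(1-p)^{ct} \to 0$ as $t_0 \to \infty$. Combining with the positive probability $p^{|\D_{t_0}|}$ that $\D_{t_0}$ is initially fully infected (which is independent of the ring events, since the rings and $\D_{t_0}$ consist of disjoint faces), we obtain positive probability of percolation. Finally, the event ``$\I$ percolates'' is weakly translation-invariant under horizontal translations by multiples of a common horizontal period of the strips (such translations exist in infinitely many and map each polygon to a congruent polygon, since all strips share side length $1$ and the hex strips must be flat-top whenever square strips are present), so Kolmogorov's 0--1~Law raises this probability to $1$.
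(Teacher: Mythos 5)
Your proposal follows the paper's proof closely: upper bound via Observation~\ref{obs1} applied to a configuration confined to a hex strip (the paper enumerates five configurations, but yours—a hexagon together with its four in-strip triangles—works just as well and is arguably cleaner), and lower bound via a growing-region-plus-rings argument combined with Kolmogorov's 0--1 Law using horizontal translation invariance. The only substantive bookkeeping difference is that the paper partitions each ring into \emph{six} sides (top, bottom, and four diagonal sides) because offsets between successive hex strips make the growing region $\A_t$ hexagon-shaped rather than rectangular, whereas you use four sides; your version should still go through, but it requires the same care the paper takes in defining the left/right boundary chains so that infection actually propagates along them across offset strip boundaries.
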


\begin{figure}[!t]
\centering
\begin{tikzpicture}[xscale=.25, yscale=.433]

\draw[dashed] (-2,-11.5) rectangle (46,11.2);
\clip (-2,-11.55) rectangle (46,11.2);
\foreach \y in {-1,-2,5.196,-8.2}
\foreach \x in {-2,...,24} 
{
\begin{scope}[xshift=\x*2cm+1cm, yscale=1.1547, yshift=\y*1cm]
\draw (0,0) -- (2,0) -- (2,1) -- (0,1) -- cycle;
\end{scope}
}

\foreach \y in {0,.5,-1.57735,1.79,2.29, -2.88} 
\foreach \x in {0,...,6} 
{
\begin{scope}[xshift=\x*8cm-4cm, yshift = \y*4cm]
\draw (0,1) -- (1,0) -- (3,0) -- (4,1) -- (5,0) -- (7,0) -- (8,1) -- (7,2) --
(5,2) -- (4,1) -- (3,2) -- (1,2) -- (0,1);
\draw (0,0) -- (8,0) (8,2) -- (0,2);
\end{scope}
}

\foreach \y in {1,-1.07735,-2.07735}
\foreach \x in {0,...,6} 
{
\begin{scope}[xshift=\x*8cm-2cm, yshift = \y*4cm]
\draw (0,1) -- (1,0) -- (3,0) -- (4,1) -- (5,0) -- (7,0) -- (8,1) -- (7,2) --
(5,2) -- (4,1) -- (3,2) -- (1,2) -- (0,1);
\draw (0,0) -- (8,0) (8,2) -- (0,2);
\end{scope}
}

\draw[ultra thick] (21,0) -- (23,0) -- (23,-1.1547) --++ (-2,0) -- cycle;

\draw[ultra thick] (5,0) --++ (-1,1) --++ (1,1) --++ (-1,1) --++ (3,3) --++
(0,1.1547) --++ (30,0) --++ (0,-1.1547) --++ (3,-3) --++ (-1,-1) --++ (1,-1)
--++ (-1,-1) --++ (0,-2.3094) --++ (-6,-6) --++ (-22,0) --++ (-6,6) -- cycle;

\end{tikzpicture}
\caption{A tiling in $\T_{\mbox{strips}}$, along with a marked face and
$\A_4$.\label{fig:strips}\label{fig5}}
\end{figure}
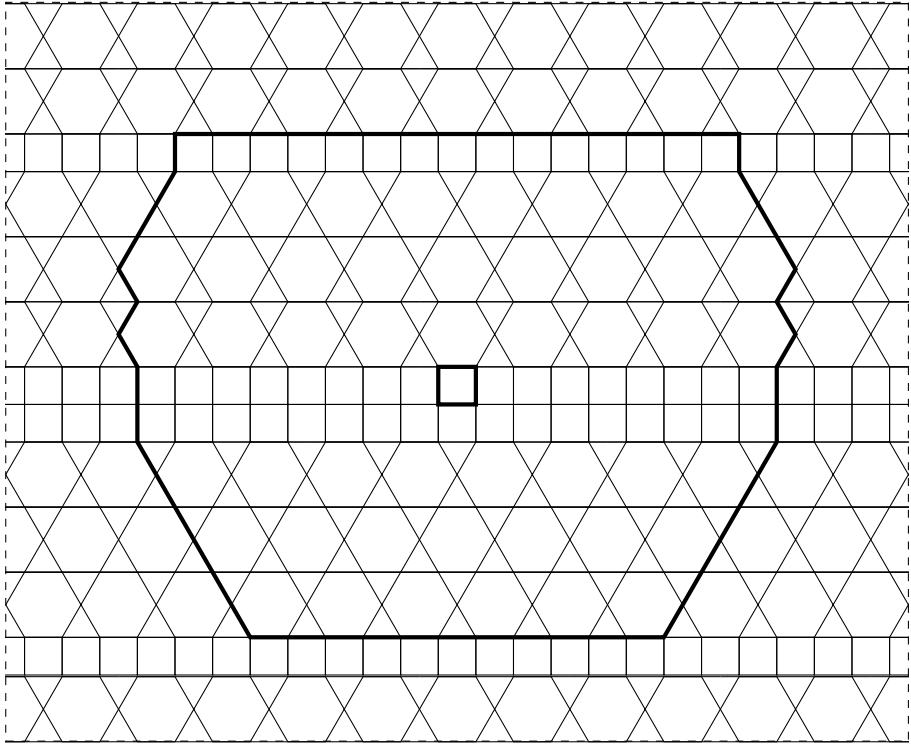
\begin{proof} 
Let $T$ be a tiling in $\mathcal{T}_{\textrm{strips}}$.  
Again the upper bound follows from Observation~\ref{obs1}.  The main step is to
show that $T$ contains a configuration $C$ such that each face of $C$ has
at most two adjacent faces outside $C$. A short analysis yields that $T$
contains infinitely many copies of one of the following: (a) adjacent triangles,
(b) a hexagon with six adjacent triangles, (c) four squares incident to a common
vertex, (d) two triangles adjacent to a common square, or (e) two triangles
linked by two squares (as in Figure~\ref{fig4}).

Now we show that for every $p$ with $0<p<1$, if $k=2$, then our random set $\I$
percolates with positive probability.  By combining this with the $0$--$1$ Law,
we conclude that the bootstrap threshold for $T$ is 2.

First we must find an analogue of $\D_t$ from our proof for the lattice (4.8.8).
Consider a face \Memph{$f$} of $T$ which is not a triangle. We let
\Memph{$\A_t$} denote a collection of faces that is centered on $f$ and that is
shaped somewhere between a square and a hexagon (depending on the number of
offset rows involved).  In the strip containing $f$, $\A_t$
contains $2t$ consecutive faces to the left of $f$ (including triangles),
and $2t$ consecutive faces to the right of $f$. For the strip above this,
$\A_t$ contains the faces directly above, if the two strips are not offset,
and the faces above and slightly towards the center, if the strips are offset. 
Similarly for the strip below, $\A_t$ contains the faces directly below if
the two strips are not offset, and the faces below and slightly toward the
center when the faces are offset.  We continue this for the $t$ rows above $f$
and the $t$ rows below $f$.  This means that $\A_t$ always consists of
$2t+1$ rows of faces, but the number of faces in the rows decreases slightly
as we move away from the center row (whenever successive strips are
offset).

Now $\A_t$ looks like a square when it has no offset strips, and
looks closer to a hexagon when it has many.  Even when $\A_t$ looks like
a rectangle, we think of $\A_{t+1}\setminus \A_t$ as having six sides. 
The top and bottom sides are easy to see; they consist of faces directly
above/below the faces in the top/bottom row of $\A_t$.  The top-left side 
consists of faces directly left of an end-face of $\A_t$ and which are in a
strip above $f$.  The bottom-left, top-right, and top-left sides are defined
similarly.

The key insight is that, just like for the lattice (4.8.8),
if $\A_t$ is infected and $\A_{t+1}\setminus
\A_t$ has even a single infected (non-triangular) face in one of its sides,
then that entire side becomes infected.
By repeatedly applying this idea, we see that the infection spreads along the
entire top-left side.  Once two adjacent sides are infected (e.g.~top-left and
top, or bottom-right and top-right), the corner face lying between them also 
has two infected neighbors, so it becomes infected.  The important consequence
of all this is the following.  If $\A_t$ is completely infected, and at
least one face on each of the six sides of $\A_{t+1}\setminus \A_t$ is
infected, then $\A_{t+1}$ also becomes completely infected.

Now we bound the probability that this happens.
Each side has at least
$t$ non-triangular faces\footnote{
When $\A_t$ contains two successive hex strips that are offset, the row further
from $f$ contributes to $\A_t$ two fewer faces than the row nearer $f$
(including one fewer hex face).  Thus,
the top and bottom sides can each have as many as $2t+1$ adjacent
non-triangular faces.  But, this only helps us, since a side with more faces is
\emph{more} likely to have an infected face.
}, so the probability
that none of the faces on a side are infected is at most $(1-p)^t$. 
Thus, the probability that at least one side of $\A_{t+1}\setminus \A_t$
has no infected face is no more than $6(1-p)^t$.

Now our argument exactly follows that for (4.8.8).
The probability that at least one ring around
$\A_t$ does not become infected is at most
$\sum_{j=0}^{\infty}6(1-p)^{t+j}=\frac{6(1-p)^t}{p}$, and for large enough $t$
we have $\frac{6(1-p)^t}{p}<1$.  Now the probability that $\A_t$ is
initially entirely infected and that every ring around $\A_t$
contains an infected face on each of the six sides is
$p^{|\A_t|}(1-\frac{6(1-p)^t}{p})>0$.  Since this event is
translation-invariant in the horizontal direction, it is weakly
translation-invariant.  So the $0$--$1$ Law tells us that $\I$
percolates with probability $1$.
\end{proof}

\section*{Acknowledgments}
We are indebted to Branko Gr\"{u}nbaum and Geoffrey Shephard for their article
\emph{Tilings by Regular Polygons}~\cite{GS}, which inspired much of this work.
Thanks also to one of our referees, whose feedback helped improve our presentation.

\bibliographystyle{abbrvplain}
\bibliography{percolation}
\end{document}